\renewcommand{\title}[1]{\vspace{\fill}
  \eject\addtolength{\baselineskip}{4pt}
  {\bfseries\LARGE #1}\\[3mm]\addtolength{\baselineskip}{-4pt}}
\renewcommand{\author}[3]{\parbox[t]{75mm}
  {\begin{center}{\scshape #1}\\[3mm] #2\\
      {\ttfamily #3} \end{center}}}
\newtheorem{thm}{\bfseries Theorem}
\newtheorem{cor}[thm]{\bfseries Corollary}
\definecolor{officegreen}{rgb}{0.0, 0.5, 0.0}
\tikzset{
    my box/.style = {
        , line cap = round
        , line join = round
    }
}
\tikzset{wavy/.style={decorate,decoration={snake,amplitude=.4mm,segment length=2mm,post length=0mm,pre length=0mm},line width=.5}}
\renewcommand*{\write@math}[3]{%
            \pgfmathtruncatemacro{\printindex}{#3+1}
            \Vertex[x = #1,y = #2,%
                    L = \cmdGR@cl@prefix\grMathSep{\printindex}]{\cmdGR@cl@prefix#3}}
\def\1{\mathbb 1}
\begin{document}

\begin{center}

  \title{The Ramsey number of a long cycle and complete graphs}
  \author{
    P\'eter Madarasi
  }{
    Department of Operations Research \\
    E\"otv\"os Lor\'and University\\
    Budapest, Hungary
  }{
    madarasi@cs.elte.hu
  }


\end{center}


\begin{quote}
  {\bfseries Abstract:}
  In this paper, we prove that the multicolored Ramsey number $R(G_1,\dots,G_n,K_{n_1},\dots,K_{n_r})$ is at least $(\gamma-1)(\kappa-1)+1$ for arbitrary connected graphs $G_1,\dots,G_n$ and $n_1,\dots,n_r\in\mathbb{N}$, where $\gamma=R(G_1,\dots,G_n)$ and $\kappa=R(K_{n_1},\dots,K_{n_r})$.
Erd\H os at al. conjectured that $R(C_n,K_l)=(n-1)(l-1)+1$ for every $n\geq l\geq 3$ except for $n=l=3$ \cite{erdosConj}. Nikiforov proved this conjecture for $n\geq 4l+2$.
Using the above bound, we derive the following generalization of this result. $R(C_n,K_{n_1},\dots,K_{n_r})=(n-1)(\kappa-1)+1$, where $\kappa=R(K_{n_1},\dots,K_{n_r})$ and $n\geq 4\kappa+2$.
\end{quote}

\begin{quote}
  \textbf{Keywords:} Multicolored Ramsey number, Ramsey number of a long cycle and complete graphs, Ramsey number of connected graphs
\end{quote}
\vspace{5mm}

\section{Introduction}\label{sec:intro}
The Ramsey number $R(G_1, G_2,\dots, G_n)$ is the smallest positive integer $p$ such that if $(E_1, E_2, ,\dots, E_n)$ is an arbitrary partition of the edges of the complete graph $K_p$, then the edge-induced subgraph $\langle E_i\rangle$ contains a graph isomorphic to $G_i$ for some $i$. The classes $E_1, E_2,\dots, E_n$ can be visualized as color classes and the partition $(E_1, E_2,\dots, E_n)$ is thought of as an edge coloring using $n$ colors. Erd\H os at al. conjectured that $R(C_n,K_l)=(n-1)(l-1)+1$ for every $n\geq l\geq 3$ except for $n=l=3$ \cite{erdosConj}. Nikiforov proved this conjecture for $n\geq 4l+2$.
In this paper, we give a lower bound for $R(G_1,\dots,G_n,K_{n_1},\dots,K_{n_r})$ in terms of $R(G_1,\dots,G_n)$ and $R(K_{n_1},\dots,K_{n_r})$, where $G_1,\dots,G_n$ are connected graphs. Using this bound, we prove a generalization of Nikiforov's theorem~\cite{Nikiforov}.

\section{New results}
First, consider the following lower bound for $R(G_1,\dots,G_n,K_{n_1},\dots,K_{n_r})$.
\begin{thm}\label{thm:bound}
For any connected graphs $G_1,\dots,G_n$,
\begin{equation}
  R(G_1,\dots,G_n,K_{n_1},\dots,K_{n_r})\geq(\gamma-1)(\kappa-1)+1,
  \end{equation}
where $\gamma=R(G_1,\dots,G_n)$ and $\kappa=R(K_{n_1},\dots,K_{n_r})$.
\end{thm}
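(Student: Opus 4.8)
The plan is to build an explicit edge-coloring of $K_p$ with $p=(\gamma-1)(\kappa-1)$ that avoids $G_i$ in color $i$ for each $i\in\{1,\dots,n\}$ and avoids $K_{n_j}$ in color $n+j$ for each $j\in\{1,\dots,r\}$, which shows $R(G_1,\dots,G_n,K_{n_1},\dots,K_{n_r})>p$ and hence the claimed bound. The natural construction is a blow-up: by definition of $\gamma=R(G_1,\dots,G_n)$ there is an $n$-coloring $\phi$ of $K_{\gamma-1}$ with no monochromatic $G_i$, and by definition of $\kappa=R(K_{n_1},\dots,K_{n_r})$ there is an $r$-coloring $\psi$ of $K_{\kappa-1}$ with no monochromatic $K_{n_j}$. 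Partition the $p$ vertices into $\gamma-1$ blocks $V_1,\dots,V_{\gamma-1}$, each of size $\kappa-1$, and fix a bijection of each block with the vertex set of $K_{\kappa-1}$. Color an edge inside a block $V_a$ using $\psi$ (colors $n+1,\dots,n+r$), and color an edge between distinct blocks $V_a,V_b$ with color $\phi(ab)\in\{1,\dots,n\}$.

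Then I would verify the two non-existence properties. For a color $n+j$ with $j\in\{1,\dots,r\}$: every edge of that color lies inside a single block (cross-block edges all receive colors $\le n$), so a monochromatic $K_{n_j}$ in color $n+j$ would have to live inside one block $V_a$; but via the fixed bijection that would give a monochromatic $K_{n_j}$ in the coloring $\psi$ of $K_{\kappa-1}$, contradicting the choice of $\psi$. For a color $i\le n$: here I use the connectedness hypothesis on $G_i$ crucially. A subgraph in color $i$ has all its edges between blocks, so contracting each block to a point maps a connected color-$i$ subgraph onto a connected subgraph of $K_{\gamma-1}$ whose edges all have $\phi$-color $i$; if this connected color-$i$ subgraph contained a copy of $G_i$, then since $G_i$ is connected its image under the block-contraction is a connected subgraph of $K_{\gamma-1}$ monochromatic in color $i$. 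I need that this image actually contains a copy of $G_i$: because $G_i$ is connected, if two vertices of the copy lay in the same block, the induced subgraph on that block in color $i$ would be empty (no intra-block edge has color $i$), so $G_i$ could only be connected if it had at most one vertex — excluded for any graph with an edge, and if $G_i$ has no edges then trivially $R$ with that coordinate is $1$ or the statement is vacuous. Hence the vertices of any color-$i$ copy of $G_i$ lie in distinct blocks, and projecting them to $K_{\gamma-1}$ yields a genuine copy of $G_i$ monochromatic in color $i$ under $\phi$, contradicting the choice of $\phi$.

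The main obstacle, and the only place the argument is not completely mechanical, is the color-$i$ case and specifically the role of connectedness: one must argue cleanly that a copy of a connected graph $G_i$ inside the cross-block color class cannot place two of its vertices in the same block. The key observation is that within a block there are no edges of color $i$ at all, so if a copy of $G_i$ had two vertices $u,v$ in the same block, then in that copy $u$ and $v$ are non-adjacent, which is fine in general — the real point is more subtle. Actually the right statement is: since $G_i$ is connected on say $m_i$ vertices, and the color-$i$ graph $H_i$ restricted to cross-block edges is exactly the $\phi$-preimage structure, a copy of $G_i$ in $H_i$ that used a block more than once would, upon projecting vertices to block-indices, collapse at least two vertices together, but collapsing adjacent vertices is impossible (no color-$i$ edge inside a block) so only non-adjacent vertices collapse; yet then in $K_{\gamma-1}$ we would still find $G_i$ as a (not necessarily induced) subgraph after identifying those non-adjacent vertices — and this is where I invoke that a connected graph on $m_i\le\gamma-1$ vertices embeds in $K_{\gamma-1}$ precisely when its vertices are distinct, so I must rule out the identification. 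The cleanest fix is to note that if any two vertices of the $G_i$-copy share a block, pick a shortest path in $G_i$ between two such same-block vertices; its consecutive vertices are adjacent, hence in distinct blocks, so the path alternates out of and back into that block, and in particular some edge of the path joins the block to itself — contradiction. I would write this lemma-style argument out carefully; everything else (block sizes multiplying to $p$, cross edges getting colors $\le n$, intra edges getting colors $>n$) is bookkeeping.
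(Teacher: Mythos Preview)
Your construction has the blow-up oriented the wrong way, and this breaks the argument for the $G_i$-colors. You take $\gamma-1$ blocks of size $\kappa-1$, put the $K$-coloring $\psi$ \emph{inside} the blocks, and put the $G$-coloring $\phi$ \emph{across} blocks. The color-$i$ graph (for $i\le n$) is then the blow-up by factor $\kappa-1$ of the color-$i$ subgraph of $\phi$, and blow-ups of $G_i$-free graphs are \emph{not} $G_i$-free in general. Concretely, take $n=1$, $G_1=C_4$, $r=1$, $n_1=3$, so $\gamma=R(C_4)=4$ and $\kappa=R(K_3)=3$. Your construction gives three blocks of size two; the single $G$-color is placed on all cross-block edges, producing $K_{2,2,2}$ in color $1$, which contains $C_4$. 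Your shortest-path lemma is where the error surfaces: you correctly note that consecutive path vertices lie in distinct blocks, and then immediately contradict yourself by claiming ``some edge of the path joins the block to itself''. A connected $G_i$ can sit in a multipartite cross-block graph with several vertices per block, as long as adjacent vertices go to different blocks; connectedness does not prevent this.

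The paper's construction reverses the roles: take $\kappa-1$ blocks of size $\gamma-1$, put the $G$-avoiding $n$-coloring \emph{inside} each block, and put the $K$-avoiding $r$-coloring \emph{across} blocks. Now the use of connectedness becomes trivial rather than delicate: the color-$d_i$ graph is a disjoint union of $\kappa-1$ copies of the color-$i$ subgraph of $K_{\gamma-1}$, so a connected $G_i$ would have to lie in a single block, impossible by choice of the inner coloring. For a color $c_j$, a monochromatic $K_{n_j}$ meets each block in at most one vertex (no intra-block edge carries a $c$-color), hence projects to a monochromatic $K_{n_j}$ in the outer coloring of $K_{\kappa-1}$, again impossible. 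The asymmetry between ``connected graph'' and ``complete graph'' is exactly what dictates which coloring goes inside and which goes across: complete graphs survive projection (at most one vertex per block forces injectivity), while arbitrary connected graphs do not, so the $G_i$'s must be handled by confinement to a block instead.
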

\begin{proof}
Consider an $r$-coloring of the edges of $K_{\kappa-1}$ with colors $c_1,\dots,c_r$ such that the graph induced by color class $c_i$ does not contain $K_{n_i}$ as a subgraph for all $i\in\{1,\dots,r\}$. Replace each node of $K_{\kappa-1}$ by the complete graph $K_{\gamma-1}$ whose edges are colored by colors $d_1,\dots,d_n$ such that the $i^\emph{th}$ color class does not contain $G_i$ as subgraph for all $i\in\{1,\dots,n\}$, and expand each original edge $uv$ of $K_{\kappa-1}$ to a complete bipartite graph between the two $K_{\gamma-1}$ graphs that replace nodes $u$ and $v$. Let the new edges added in the latter step inherit the color of $uv$. This way, $K_{(\kappa-1)(\gamma-1)}$ has been colored using colors $c_1,\dots,c_r,d_1,\dots,d_n$. For each $i\in\{1,\dots,n\}$, color class $d_i$ does not contain $G_i$ as a subgraph, because $G_i$ is connected and each connected component of color $d_i$ in $K_{(\kappa-1)(\gamma-1)}$ is induced by a copy of $K_{\gamma-1}$. By contradiction, if the edges with color $c_i$ contained $K_{n_i}$ as a subgraph, $K_{n_i}$ contained at most one node of each copy of $K_{\gamma-1}$ (as they have no edges with color $c_i$), meaning that it is a subgraph of $K_{\kappa-1}$ with color $c_i$ -- contradicting the definition of $\kappa$.
\end{proof}

In what follows, the bound given by Theorem~\ref{thm:bound} is shown to be tight in a special case, which will be used to derive the exact value of $R(C_n,K_{n_1},\dots,K_{n_r})$ in terms of $R(K_{n_1},\dots,K_{n_r})$ if cycle $C_n$ is sufficiently long.

\begin{thm}\label{thm:tight}
  For any connected graphs $G_1,\dots,G_n$, if $R(G_1,\dots,G_n,K_\kappa)=(\gamma-1)(\kappa-1)+1$, then
  \begin{equation}
    R(G_1,\dots,G_n,K_{n_1},\dots,K_{n_r})=(\gamma-1)(\kappa-1)+1,    
  \end{equation}
where $\gamma=R(G_1,\dots,G_n)$ and $\kappa=R(K_{n_1},\dots,K_{n_r})$.
\end{thm}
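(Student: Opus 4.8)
The plan is to combine Theorem~\ref{thm:bound} with a simple color-merging argument. The inequality $R(G_1,\dots,G_n,K_{n_1},\dots,K_{n_r})\geq(\gamma-1)(\kappa-1)+1$ is immediate from Theorem~\ref{thm:bound}, so all the work lies in establishing the matching upper bound, and for that the hypothesis $R(G_1,\dots,G_n,K_\kappa)=(\gamma-1)(\kappa-1)+1$ will be used in its ``upper bound'' direction.

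To prove the upper bound I would set $p=(\gamma-1)(\kappa-1)+1$ and take an arbitrary coloring of the edges of $K_p$ with colors $d_1,\dots,d_n,c_1,\dots,c_r$. Suppose for contradiction that color class $d_i$ avoids $G_i$ for every $i\in\{1,\dots,n\}$ and color class $c_j$ avoids $K_{n_j}$ for every $j\in\{1,\dots,r\}$. The first step is to merge the $r$ colors $c_1,\dots,c_r$ into a single new color $c$, obtaining an $(n+1)$-coloring of $K_p$ with colors $d_1,\dots,d_n,c$. Since $p=R(G_1,\dots,G_n,K_\kappa)$ by hypothesis, this coloring contains $G_i$ in color $d_i$ for some $i$ or a copy of $K_\kappa$ in color $c$; the former is ruled out by assumption, so there is a set $S$ of $\kappa$ vertices all of whose edges got color $c$, i.e.\ a color among $c_1,\dots,c_r$ in the original coloring.

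The second step is to restrict the original coloring back to $S$: this is an $r$-coloring of $K_\kappa$ using only the colors $c_1,\dots,c_r$, so by the definition of $\kappa=R(K_{n_1},\dots,K_{n_r})$ it contains $K_{n_j}$ in color $c_j$ for some $j$, contradicting our assumption. Hence $R(G_1,\dots,G_n,K_{n_1},\dots,K_{n_r})\leq p$, and together with Theorem~\ref{thm:bound} this gives the claimed equality.

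There is no real obstacle here; the one point that must be checked carefully is that the merge/un-merge step is legitimate, i.e.\ that a $K_\kappa$ which is monochromatic in the merged color $c$ genuinely restricts to a valid $r$-coloring on its $\kappa$ vertices, which is precisely what licenses invoking the definition of $\kappa$. It is also worth remarking that the hypothesis $R(G_1,\dots,G_n,K_\kappa)=(\gamma-1)(\kappa-1)+1$ is only an upper-bound assumption, since the corresponding lower bound always holds by applying Theorem~\ref{thm:bound} with $r=1$ and $n_1=\kappa$; thus the theorem says that the single-complete-graph case already forces the general one.
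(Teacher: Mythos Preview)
Your proof is correct and follows essentially the same color-merging argument as the paper: merge $c_1,\dots,c_r$ into a single color $c$, invoke the hypothesis $R(G_1,\dots,G_n,K_\kappa)=(\gamma-1)(\kappa-1)+1$ to find either some $G_i$ in color $d_i$ or a $K_\kappa$ in color $c$, and in the latter case un-merge and apply the definition of $\kappa$. Your explicit invocation of Theorem~\ref{thm:bound} for the lower bound and your closing remarks on why the hypothesis is effectively only an upper-bound condition are nice clarifications, but the core argument is identical to the paper's.
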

\begin{proof}
Consider an arbitrary $(m+n)$-edge coloring of $K_{(\gamma-1)(\kappa-1)+1}$ using co\-lors $c_1,\dots,c_r,d_1,\dots,d_n$. In what follows, we argue that either the graph induced by color class $c_i$ contains $K_{n_i}$ as a subgraph for some $i\in\{1,\dots,r\}$ or color class $d_i$ contains $G_i$ as subgraph for some $i\in\{1,\dots,n\}$.
Let us recolor color classes $c_1,\dots,c_r$ to a single new color $c$ and keep the colors of the rest of edges. This way one gets a coloring of $K_{(\gamma-1)(\kappa-1)+1}$ with colors $d_1,\dots,d_n$ and $c$. Since $R(G_1,\dots,G_n,K_\kappa)=(\gamma-1)(\kappa-1)+1$, the new coloring contains $G_i$ in color $d_i$ for some $i\in\{1,\dots,n\}$ or it includes $K_\kappa$ in color $c$. In the former case, we have that the original coloring (the one that uses colors $c_1,\dots,c_r,d_1,\dots,d_n$) contains $G_i$ in color $d_i$, hence we are done. Otherwise, the edges of $K_\kappa$ with color $c$ (in the modified coloring) contain $K_\kappa$ as subgraph. Recovering the original colors ($c_1,\dots,c_r$) of each edge, one gets by the definition of $\kappa$ that the graph induced by color class $c_i$ contains $K_{n_i}$ as a subgraph for at least one $i\in\{1,\dots,r\}$. Hence for any $(m+n)$-edge coloring of $K_{(\gamma-1)(\kappa-1)+1}$, the graph induced by color class $c_i$ contains $K_{n_i}$ as a subgraph for some $i\in\{1,\dots,r\}$ or color class $d_i$ contains $G_i$ as subgraph for some $i\in\{1,\dots,n\}$, which completes the proof.
\end{proof}

In what follows, an interesting corollary of Theorem~\ref{thm:tight} is presented. Erd\H os at al. conjectured that $R(C_n,K_l)=(n-1)(l-1)+1$ for every $n\geq l\geq 3$ except for $n=l=3$~\cite{erdosConj}. Nikiforov proved this conjecture for $n\geq 4l+2$, see~\cite{Nikiforov}. By Theorem~\ref{thm:tight}, one gains the following generalization of this result.


\begin{cor}\label{cor:cycle}
$R(C_n,K_{n_1},\dots,K_{n_r})=(n-1)(\kappa-1)+1$, where\\$\kappa=R(K_{n_1},\dots,K_{n_r})$ and $n\geq 4\kappa+2$.
\end{cor}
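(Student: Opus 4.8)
The plan is to deduce this directly from Theorem~\ref{thm:tight}, specialized to a single connected graph $G_1 := C_n$ (so the parameter playing the role of ``$n$'' in Theorem~\ref{thm:tight} equals $1$, while $r$ stays arbitrary). In this specialization $\gamma = R(C_n)$, and since the one-color Ramsey number of a graph is just its number of vertices, $\gamma = n$. Consequently the hypothesis of Theorem~\ref{thm:tight} reads $R(C_n, K_\kappa) = (n-1)(\kappa-1)+1$, and its conclusion is exactly the asserted identity $R(C_n, K_{n_1},\dots,K_{n_r}) = (n-1)(\kappa-1)+1$. So the whole corollary reduces to verifying that single two-color equality in the range $n \geq 4\kappa+2$.

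For that verification I would invoke Nikiforov's theorem~\cite{Nikiforov}, which gives $R(C_n, K_l) = (n-1)(l-1)+1$ whenever $n \geq 4l+2$. Taking $l := \kappa$, the assumption $n \geq 4\kappa+2$ of the corollary is precisely Nikiforov's hypothesis $n \geq 4l+2$, and Nikiforov's conclusion is precisely the hypothesis required by Theorem~\ref{thm:tight}; feeding this in finishes the proof. Note that the matching lower bound $R(C_n,K_{n_1},\dots,K_{n_r}) \geq (n-1)(\kappa-1)+1$ is already supplied by Theorem~\ref{thm:bound} (and is subsumed by Theorem~\ref{thm:tight}), so no separate extremal construction is needed here.

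I do not expect a genuine obstacle: the mathematical substance sits entirely in Theorems~\ref{thm:bound} and~\ref{thm:tight} together with Nikiforov's result, and the corollary is essentially a parameter-matching exercise — the two points worth a line of care are the identity $\gamma = R(C_n) = n$ and the alignment of the inequalities $n \geq 4\kappa+2$ and $n \geq 4l+2$. One should also dispose of the degenerate values of $\kappa$: if $\kappa = 1$ (some $n_i = 1$) the claimed identity is the trivial $R(\cdot) = 1$; if $\kappa = 2$ one need not quote Nikiforov at all, since a $K_2$-free color class is empty and hence $R(C_n, K_2) = n = (n-1)(\kappa-1)+1$, so Theorem~\ref{thm:tight} still applies; and for $\kappa \geq 3$ Nikiforov is used verbatim.
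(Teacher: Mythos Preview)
Your proposal is correct and follows essentially the same route as the paper: specialize Theorem~\ref{thm:tight} with $G_1=C_n$, note $\gamma=R(C_n)=n$, and verify its hypothesis $R(C_n,K_\kappa)=(n-1)(\kappa-1)+1$ via Nikiforov's theorem using $n\geq 4\kappa+2$. Your added remarks on the degenerate cases $\kappa\in\{1,2\}$ and the reference to Theorem~\ref{thm:bound} are fine but not needed, since the paper's argument (and yours) already covers them implicitly through Theorem~\ref{thm:tight}.
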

\begin{proof}
Let $\gamma=R(C_n)=n$ and $\kappa=R(K_{n_1},\dots,K_{n_r})$. By Nikiforov's theorem, one gets that $R(C_n,K_\kappa)=(n-1)(\kappa-1)+1=(\gamma-1)(\kappa-1)+1$. That is, the conditions of Theorem~\ref{thm:tight} are fulfilled, hence $R(C_n,K_{n_1},\dots,K_{n_r})=(\gamma-1)(\kappa-1)+1$, which completes the proof.
\end{proof}

Figure~\ref{fig:example} presents the construction for $R(C_{22},K_3,K_3)$.
In fact, the following more general result holds for $\kappa\in\{3,\dots,7\}$.

\begin{cor}\label{cor:cycle2}
$R(C_n,K_{n_1},\dots,K_{n_r})=(n-1)(\kappa-1)+1$, where\\$\kappa=R(K_{n_1},\dots,K_{n_r})\in\{3,\dots,7\}$ and $n\geq \kappa$.
\end{cor}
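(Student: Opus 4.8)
The strategy is to apply Theorem~\ref{thm:tight} with $G_1=C_n$, so that it suffices to verify its hypothesis, namely $R(C_n,K_\kappa)=(n-1)(\kappa-1)+1$ for $\kappa\in\{3,\dots,7\}$ and $n\geq\kappa$. Since $R(C_n)=n=\gamma$, this is exactly the statement that the Erd\H os--Faudree--Rousseau--Schelp conjecture holds for the pair $(C_n,K_\kappa)$ in the range $3\le\kappa\le 7$, $n\ge\kappa$. Thus the corollary reduces entirely to citing known case-by-case results on two-color cycle--complete Ramsey numbers.

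First I would recall the lower bound $R(C_n,K_\kappa)\ge(n-1)(\kappa-1)+1$, which is immediate from Theorem~\ref{thm:bound} (or the classical construction of $\kappa-1$ disjoint red copies of $K_{n-1}$ with all crossing edges blue). So only the upper bound needs external input. For $\kappa=3$ this is the theorem of Bondy and Erd\H os that $R(C_n,K_3)=2n-1$ for $n\ge 4$, together with the trivial $R(C_3,K_3)=6=2\cdot3-1$... wait, one must be careful: $R(C_3,K_3)=6$ while $(3-1)(3-1)+1=5$, so $n=\kappa=3$ is the single excluded case — but here $\kappa=R(K_{n_1},\dots,K_{n_r})=3$ forces $r\ge 2$ with all $n_i=2$ except... actually $\kappa=3$ can only arise as $R(K_3)=3$ (a single complete graph), which would just give back $R(C_n,K_3)$, or as $R(K_2,\dots)$ type degeneracies; in any genuine multicolor instance with $\kappa=3$ one still needs $n\ge\kappa=3$, and the case $n=3$ must be checked to not reintroduce the exception. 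I would address this by noting that the hypothesis of Theorem~\ref{thm:tight} requires $R(C_n,K_\kappa)=(\gamma-1)(\kappa-1)+1$ exactly, so the corollary statement implicitly already excludes any $(n,\kappa)$ where this fails, and verify that among $\kappa\in\{3,\dots,7\}$, $n\ge\kappa$ the only failure is $(3,3)$, which is handled separately or excluded.

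For the remaining values I would cite the literature establishing $R(C_n,K_m)=(n-1)(m-1)+1$ in the relevant small-$m$, $n\ge m$ regime: $\kappa=4$ is due to Yang, Huang and Zhang (and Nikiforov's bound $n\ge 4m+2=18$ is weaker); $\kappa=5$ to Bollob\'as, Jayawardene, Yang, Huang, Zhang and Zhou; $\kappa=6$ to Schiermeyer; and $\kappa=7$ to Chen, Cheng and Zhang. Each of these confirms the conjectured value for all $n\ge\kappa$ (or all $n\ge m-1$), which is exactly what is needed. Once $R(C_n,K_\kappa)=(n-1)(\kappa-1)+1$ is in hand, Theorem~\ref{thm:tight} applied with $\gamma=n$ yields $R(C_n,K_{n_1},\dots,K_{n_r})=(\gamma-1)(\kappa-1)+1=(n-1)(\kappa-1)+1$ immediately.

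**Main obstacle.** There is no real mathematical difficulty once Theorem~\ref{thm:tight} is available — the whole content is bookkeeping of which published results cover which $(n,\kappa)$ pairs. The one genuine subtlety, which I expect to be the only thing requiring care in the write-up, is the boundary behavior near $n=\kappa$ and especially the exceptional pair $(n,\kappa)=(3,3)$: I would need to confirm that for $\kappa\in\{4,5,6,7\}$ the cited results really do go all the way down to $n=\kappa$ (not just $n\ge$ some larger threshold), and to state the $\kappa=3$ case with the $(3,3)$ exception made explicit, so that the hypothesis of Theorem~\ref{thm:tight} is legitimately satisfied in every case claimed.
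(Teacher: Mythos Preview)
Your proposal is correct and follows essentially the same route as the paper: reduce to Theorem~\ref{thm:tight} with $G_1=C_n$, $\gamma=n$, and then cite the literature establishing $R(C_n,K_\kappa)=(n-1)(\kappa-1)+1$ case by case for $\kappa=3,4,5,6,7$ (the paper cites Rosta, Yang--Huang--Zhang, Bollob\'as et al., Schiermeyer, and Chen--Cheng--Zhang, respectively). Your extra scrutiny of the boundary pair $(n,\kappa)=(3,3)$ is a legitimate worry that the paper's proof simply does not address.
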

\begin{proof}
  Similarly to the proof of Corollary~\ref{cor:cycle}, one has to show that $R(C_n,K_\kappa)=(n-1)(\kappa-1)+1$ holds. For $\kappa=3,4,5,6,7$, this was shown in \cite{Rosta73}, \cite{Yang99}, \cite{Bollobas00}, \cite{Schiermeyer03} and \cite{CHEN20081337}, respectively.
\end{proof}

  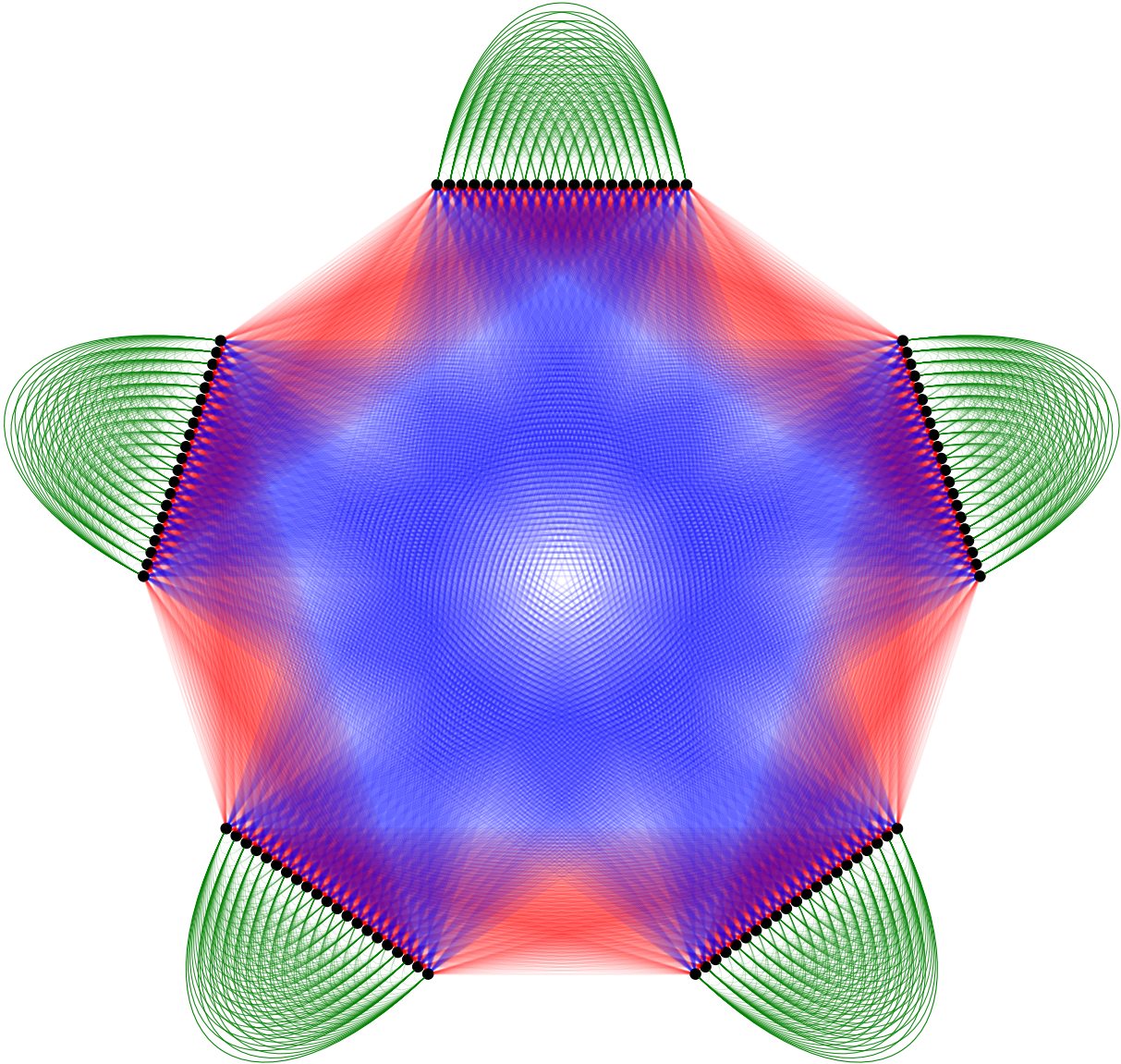
\begin{figure}[H]
    \centering
    \makebox[\textwidth][c]{

  \begin{tikzpicture}[scale=.71,rotate=180]
    \centering
    \SetVertexNoLabel
  \begin{scope}[rotate=0]
    \tikzset{VertexStyle/.append style={color=black,minimum size=1.5pt, inner sep=1.5pt}}
    \grEmptyPath[x=-20*.25/2,y=-8,RA=.25,prefix=a]{21}{0}
     \foreach \i in {0,...,20}{
    \foreach \j in {\i,...,20}{
          \draw (a\i) edge[color=officegreen,bend right=80,looseness=2.5,opacity=(1+\j-\i)/21] node[left] {} (a\j);
    }
  }
  \end{scope}
  \begin{scope}[rotate=360/5]
    \tikzset{VertexStyle/.append style={color=black,minimum size=1.5pt, inner sep=1.5pt}}
    \grEmptyPath[x=-20*.25/2,y=-8,RA=.25,prefix=b]{21}{0}
    \foreach \i in {0,...,20}{
        \foreach \j in {\i,...,20}{
              \draw (b\i) edge[color=officegreen,bend right=80,looseness=2.5,opacity=(1+\j-\i)/21] node[left] {} (b\j);
    }
  }
  \end{scope}
  \begin{scope}[rotate=2*360/5]
    \tikzset{VertexStyle/.append style={color=black,minimum size=1.5pt, inner sep=1.5pt}}
    \grEmptyPath[x=-20*.25/2,y=-8,RA=.25,prefix=c]{21}{0}
    \foreach \i in {0,...,20}{
        \foreach \j in {\i,...,20}{
              \draw (c\i) edge[color=officegreen,bend right=80,looseness=2.5,opacity=(1+\j-\i)/21] node[left] {} (c\j);
    }
  }
  \end{scope}
  \begin{scope}[rotate=3*360/5]
    \tikzset{VertexStyle/.append style={color=black,minimum size=1.5pt, inner sep=1.5pt}}
    \grEmptyPath[x=-20*.25/2,y=-8,RA=.25,prefix=d]{21}{0}
    \foreach \i in {0,...,20}{
        \foreach \j in {\i,...,20}{
              \draw (d\i) edge[color=officegreen,bend right=80,looseness=2.5,opacity=(1+\j-\i)/21] node[left] {} (d\j);
    }
  }
  \end{scope}
  \begin{scope}[rotate=4*360/5]
    \tikzset{VertexStyle/.append style={color=black,minimum size=1.5pt, inner sep=1.5pt}}
    \grEmptyPath[x=-20*.25/2,y=-8,RA=.25,prefix=e]{21}{0}
       \foreach \i in {0,...,20}{
        \foreach \j in {\i,...,20}{
              \draw (e\i) edge[color=officegreen,bend right=80,looseness=2.5,opacity=(1+\j-\i)/21] node[left] {} (e\j);
    }
  }
  \end{scope}

  \foreach \i in {0,...,20}{
    \foreach \j in {0,...,20}{
      \tikzstyle{EdgeStyle} = [color = red,opacity=.1]
      \Edges(a\i, b\j)
    }
  }
  \foreach \i in {0,...,20}{
    \foreach \j in {0,...,20}{
      \tikzstyle{EdgeStyle} = [color = red,opacity=.1]
      \Edges(b\i, c\j)
    }
  }
\foreach \i in {0,...,20}{
    \foreach \j in {0,...,20}{
      \tikzstyle{EdgeStyle} = [color = red,opacity=.1]
      \Edges(c\i, d\j)
    }
  }
  \foreach \i in {0,...,20}{
    \foreach \j in {0,...,20}{
      \tikzstyle{EdgeStyle} = [color = red,opacity=.1]
      \Edges(d\i, e\j)
    }
  }
\foreach \i in {0,...,20}{
    \foreach \j in {0,...,20}{
      \tikzstyle{EdgeStyle} = [color = red,opacity=.1]
      \Edges(e\i, a\j)
    }
  }

  \foreach \i in {0,...,20}{
    \foreach \j in {0,...,20}{
      \tikzstyle{EdgeStyle} = [color = blue,opacity=.1]
      \Edges(a\i, c\j)
    }
  }
  \foreach \i in {0,...,20}{
    \foreach \j in {0,...,20}{
      \tikzstyle{EdgeStyle} = [color = blue,opacity=.1]
      \Edges(c\i, e\j)
    }
  }
    \foreach \i in {0,...,20}{
    \foreach \j in {0,...,20}{
      \tikzstyle{EdgeStyle} = [color = blue,opacity=.1]
      \Edges(e\i, b\j)
    }
  }
    \foreach \i in {0,...,20}{
    \foreach \j in {0,...,20}{
      \tikzstyle{EdgeStyle} = [color = blue,opacity=.1]
      \Edges(b\i, d\j)
    }
  }
  \foreach \i in {0,...,20}{
    \foreach \j in {0,...,20}{
      \tikzstyle{EdgeStyle} = [color = blue,opacity=.1]
      \Edges(d\i, a\j)
    }
  }

\end{tikzpicture}
}
\vspace*{-15mm}
\caption{An example for Corollary~\ref{cor:cycle}. The largest graph and its edge coloring for which the green, red and blue edges do not contain $C_{22},K_3$ and $K_3$, respectively. The number of nodes is $R(C_{22},K_3,K_3)-1=(22-1)(6-1)=105$. The green edges form five disjoint complete graphs on 21 nodes. The red edges are the union of five complete bipartite graphs between the green complete graphs next to each other, and the blue edges are the union of five complete bipartite graph between the non-adjacent green complete graphs.}
\label{fig:example}
\end{figure}

\end{document}